\documentclass[11pt]{amsart}
\usepackage{amsfonts}
\usepackage{amsmath, amsthm}

\usepackage{comment}
\usepackage{tensor}



\hyphenpenalty=1000
\linespread{1.1}
\numberwithin{equation}{section}

\newtheorem*{Theorem}{Theorem}
\newtheorem{Lemma}{Lemma}

\newcommand{\R}{\mathbb{R}}
\renewcommand{\S}{\mathbb{S}}

\newcommand{\del}{\partial}

\renewcommand{\phi}{\varphi}

\renewcommand{\epsilon}{\varepsilon}

\newcommand{\Lip}{\mathrm{Lip}}

\title[A disc maximizes eigenvalues among isoperimetric surfaces]{A disc maximizes Laplace eigenvalues among isoperimetric surfaces of revolution}
\author{Sinan Ariturk}
\date{}

\begin{document}

\begin{abstract}
The Dirichlet eigenvalues of the Laplace-Beltrami operator are larger on a flat disc than on any other surface of revolution immersed in Euclidean space with the same boundary.
\end{abstract}

\maketitle

\section{Introduction}

Let $\Sigma$ be a compact connected immersed surface of revolution in $\R^3$ with one smooth boundary component.
The Euclidean metric on $\R^3$ induces a Riemannian metric on $\Sigma$.
Let $\Delta_\Sigma$ be the corresponding Laplace-Beltrami operator on $\Sigma$.
Denote the Dirichlet eigenvalues of $-\Delta_\Sigma$ by
\[
	0 < \lambda_1(\Sigma) < \lambda_2(\Sigma) \le \lambda_3(\Sigma) \le \ldots
\]
Let $R$ be the radius of the boundary of $\Sigma$, and let $D$ be a disc in $\R^2$ of radius $R$.
Let $\Delta$ be the Laplace operator on $\R^2$, and denote the Dirichlet eigenvalues of $-\Delta$ on $D$ by
\[
	0 < \lambda_1(D) < \lambda_2(D) \le \lambda_3(D) \le \ldots
\]
\begin{Theorem}
If $\Sigma$ is not equal to $D$, then for $j=1,2,3,\ldots$,
\[
	\lambda_j(\Sigma) < \lambda_j(D)
\]
\end{Theorem}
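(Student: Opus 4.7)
The plan is to construct a conformal diffeomorphism $\Phi \colon D \to \Sigma$ whose conformal factor is pointwise at least $1$, and strictly greater than $1$ on a non-empty open set whenever $\Sigma \neq D$. Conformal invariance of the Dirichlet energy in dimension two, together with the Courant--Fischer min-max principle, will then deliver the theorem.

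Parametrize $\Sigma$ by $(s, \theta) \in [0, L] \times S^1$ with $s$ the arclength along the generating curve, so the induced metric is $ds^2 + r(s)^2 \, d\theta^2$ with $r(0) = 0$ and $r(L) = R$. Arclength gives $r'(s)^2 + z'(s)^2 = 1$, so $|r'| \le 1$ and $R = \int_0^L r' \, ds \le L$, with equality if and only if $r(s) \equiv s$, i.e.\ if and only if $\Sigma = D$. Define $s(\rho)$ on $[0, R]$ by the initial-value problem $ds/d\rho = r(s)/\rho$, $s(R) = L$, and set $\Phi(\rho, \theta) = (s(\rho), \theta)$. A direct computation yields $\Phi^* g_\Sigma = \Omega^2 \, g_D$ with $\Omega(\rho) = r(s(\rho))/\rho$. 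To verify $\Omega \ge 1$, let $\varphi = s^{-1}$ (so $\varphi'(s) = \varphi(s)/r(s)$) and consider $\psi(s) = r(s)/\varphi(s)$; then
\[
	\psi'(s) = \frac{r'(s) - 1}{\varphi(s)} \le 0,
\]
and $\psi(L) = 1$ forces $\psi \ge 1$ on $(0, L]$, which is exactly $\Omega \ge 1$. When $\Sigma \ne D$, $L > R$ forces $r' < 1$ on a set of positive measure, and $\psi$ strictly decreases on some subinterval, giving $\Omega > 1$ on an open subset of $D$.

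For any $v \in H^1_0(D)$ the transplant $u = v \circ \Phi^{-1} \in H^1_0(\Sigma)$ satisfies
\[
	\int_\Sigma |\nabla u|^2 \, dA_\Sigma = \int_D |\nabla v|^2 \, dA_D, \qquad \int_\Sigma u^2 \, dA_\Sigma = \int_D \Omega^2 v^2 \, dA_D,
\]
by conformal invariance of the two-dimensional Dirichlet energy and the conformal scaling of area. Thus $\Omega \ge 1$ bounds the Rayleigh quotient on $\Sigma$ above by the Rayleigh quotient on $D$, with strict inequality whenever $v$ does not vanish identically on $\{\Omega > 1\}$. Applying min-max to the $j$-dimensional span of the first $j$ Dirichlet eigenfunctions of $D$---whose real-analyticity prevents them from vanishing on any open set---and using compactness of the unit sphere in a finite-dimensional subspace to pass from pointwise strictness to strictness in the maximum, yields $\lambda_j(\Sigma) < \lambda_j(D)$.

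The step requiring the most care is the behaviour of $\Phi$ at the apex $\rho = 0$: smoothness of $\Sigma$ at the axis gives $r(s) = s + O(s^3)$ as $s \to 0$, so $\varphi(s) \sim C s$ for some $C \in (0, 1]$, and $\Phi$ extends continuously across the origin with $\Omega(0) = 1/C$. One must then verify that this regularity suffices for the transplant of every disc eigenfunction to lie in $H^1_0(\Sigma)$ with the correct smoothness along the axis.
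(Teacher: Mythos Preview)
Your argument is correct and takes a genuinely different---and considerably shorter---route than the paper's. The paper separates variables to reduce to a family of one-dimensional Sturm--Liouville problems $\lambda_{k,n}(\alpha)$ and then, for each fixed $(K,N)$, pushes the meridian $\alpha$ toward $\omega$ through a chain of intermediate curves: it projects the outer segment onto the horizontal axis (Lemma~\ref{ab}), monotonizes via the Riesz sunrise lemma (Lemma~\ref{bg}), reparametrizes by arc length (Lemma~\ref{gz}), and finally unrolls the inner segment by a hand-built $C^1$ homotopy while controlling the sign of a Dini derivative of the first eigenvalue along the homotopy (Lemmas~\ref{efes}--\ref{xo}). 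Each step requires its own estimate, and the cases $K=0$ and $K\ge 1$ are handled separately.

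Your proof replaces all of this with a single conformal transplantation. The decisive observation is that the ratio $\psi(s)=r(s)/\varphi(s)$ satisfies $\psi'=(r'-1)/\varphi\le 0$ simply because the arc-length constraint forces $r'\le 1$; together with $\psi(L)=1$ this gives $\Omega\ge 1$ globally, and the two-dimensional conformal invariance of the Dirichlet integral does the rest. What the paper's approach buys is independence from conformal methods---its mode-by-mode monotonicity arguments could in principle adapt to higher-dimensional hypersurfaces of revolution or to non-Laplace operators---whereas your argument exploits a feature special to surfaces. For the theorem as stated, though, your method is both more elementary and more transparent.

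The apex issue you flag is indeed benign: since $r(s)=s+O(s^3)$ gives $\varphi(s)\sim Cs$ with $0<C\le 1$, the conformal factor extends continuously with $\Omega(0)=1/C<\infty$, so $\Omega$ is bounded on all of $D$. Boundedness of $\Omega$ is all that is needed: the Dirichlet energy of the transplant equals that of $v$ by conformal invariance, the $L^2$ norm is controlled by $\|\Omega\|_\infty^2\|v\|_{L^2(D)}^2$, and a single point has zero $H^1$-capacity in dimension two, so $v\circ\Phi^{-1}\in H^1_0(\Sigma)$ without further smoothness requirements.
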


We remark that there are compact connected surfaces, which are not surfaces of revolution, embedded in $\R^3$ whose boundary is a circle of radius $R$ and have first Dirichlet eigenvalue larger than $\lambda_1(D)$.
This can be proven with Berger's variational formulas \cite{Be}.

This problem resonates with the Rayleigh-Faber-Krahn inequality, which states that the flat disc has smaller first Dirichlet eigenvalue than any other domain in $\R^2$ with the same area \cite{F} \cite{K}.
Hersch proved that the canonical metric on $\S^2$ maximizes the first non-zero eigenvalue among metrics with the same area \cite{H}.
Li and Yau showed the canonical metric on $\mathbb{RP}^2$ maximizes the first non-zero eigenvalue among metrics with the same area \cite{LY}.
Nadirashvili proved the same is true for the flat equilateral torus, whose fundamental parallelogram is comprised of two equilateral triangles \cite{N1}.
It is not known if there is such a maximal metric on the Klein bottle, but Jakobson, Nadirashvili, and Polterovich showed there is a critical metric \cite{JNP}.
El Soufi, Giacomini, and Jazar proved this is the only critical metric on the Klein bottle \cite{EGJ}.

As for the second eigenvalue, the Krahn-Szeg\"o inequality states that the union of two discs with the same radius has smaller second Dirichlet eigenvalue than any other domain in $\R^2$ with the same area \cite{K}.
Nadirashvili proved that the union of two round spheres of the same radius has larger second non-zero eigenvalue than any metric on $\S^2$ with the same area \cite{N2}.

It is conjectured that a disc has smaller third Dirichlet eigenvalue than any other planar domain with the same area.
Bucur and Henrot established the existence of a quasi-open set in $\R^2$ which minimizes for the third eigenvalue among sets of prescribed Lebesgue measure \cite{BH}.
This was extended to higher eigenvalues by Bucur \cite{Bu}.

On a compact orientable surface, Yang and Yau obtained upper bounds, depending on the genus, for the first non-zero eigenvalue among metrics of the same area \cite{YY}.
Li and Yau extended these bounds to compact non-orientable surfaces \cite{LY}.
However, Urakawa showed that there are metrics on $\S^3$ with volume one and arbitrarily large first non-zero eigenvalue \cite{U}.
Colbois and Dodziuk extended this to any manifold of dimension three or higher \cite{CD}.

For a closed compact hypersurface in $\R^{n+1}$, Chavel and Reilly obtained upper bounds for the first non-zero eigenvalue in terms of the surface area and the volume of the enclosed domain \cite{Ch,R}.
This was extended to higher eigenvalues by Colbois, El Soufi, and Girouard \cite{CEG}.
Abreu and Freitas proved that for a metric on $\S^2$ which can be isometrically embedded in $\R^3$ as a surface of revolution, the first $\S^1$-invariant eigenvalue is less than the first Dirichlet eigenvalue on a flat disc with half the area \cite{AF}.
Colbois, Dryden, and El Soufi extended this to $O(n)$-invariant metrics on $\S^n$ which can be isometrically embedded in $\R^{n+1}$ as hypersurfaces of revolution \cite{CDE}.

We conclude this section by reformulating the theorem.
Fix a plane in $\R^3$ containing the axis of symmetry of $\Sigma$.
Identify $\R^2$ with this plane isometrically in such a way that the axis of symmetry is identified with
\[
	\{ (x,y) \in \R^2 : x = 0 \}
\]
Define
\[
	\R^2_+ = \{ (x,y) \in \R^2 : x \ge 0 \}
\]
We may assume $\del \Sigma$ intersects $\R^2_+$ at the point $(R,0)$.
Let $L$ be the length of the meridian $\Sigma \cap \R^2_+$.
Let $\alpha: [0,L] \to \R^2_+$ be a regular, arc-length parametrization of $\Sigma \cap \R^2_+$ with $\alpha(0)=(R,0)$.
Write $\alpha=(F_\alpha,G_\alpha)$.
Note that $F_\alpha(L)=0$ and $F_\alpha$ is positive over $[0,L)$.

Let $C_0^1(0,L)$ be the set of functions $w:[0,L] \to \R$ which are continuously differentiable and vanish at zero.
For a non-negative integer $k$ and a positive integer $n$, define
\[
	\lambda_{k,n}(\alpha) = \min_W \max_{w \in W} \frac{\int_0^L | w' |^2 F_\alpha + \frac{k^2 w^2}{F_\alpha} \,dt}{\int_0^L w^2 F_\alpha \,dt}
\]
Here the minimum is taken over all $n$-dimensional subspaces $W$ of $C_0^1(0,L)$.
We remark that
\[
	\bigg\{ \lambda_j(\Sigma) \bigg\} = \bigg\{ \lambda_{k,n}(\alpha) \bigg\}
\]
Moreover, if we count $\lambda_{k,n}(\alpha)$ twice for $k \neq 0$, then the values occur with the same multiplicity.
Define $\omega:[0,R] \to \R^2_+$ by
\[
	\omega(t)=(R-t,0)
\]
Define $\lambda_{k,n}(\omega)$ similarly to $\lambda_{k,n}(\alpha)$.
Then
\[
	\bigg\{ \lambda_j(D) \bigg\} = \bigg\{ \lambda_{k,n}(\omega) \bigg\}
\]
Again, if we count $\lambda_{k,n}(\omega)$ twice for $k \neq 0$, then the values occur with the same multiplicity.
Now to prove the theorem, it suffices to prove the following lemma.

\begin{Lemma}
\label{kn}
If $\alpha$ does not equal $\omega$, then for any non-negative integer $k$ and any positive integer $n$,
\[
	\lambda_{k,n}(\alpha) < \lambda_{k,n}(\omega)
\]
\end{Lemma}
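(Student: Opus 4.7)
The plan is to recast both Rayleigh quotients as the same weighted Sturm--Liouville problem on the half-line $[0,\infty)$, so that $\alpha$ and $\omega$ differ only through their weights, and then to make a direct pointwise comparison of those weights.

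To do this I would introduce the substitution
\[
\xi = \xi_\alpha(t) = \int_0^t \frac{d\tau}{F_\alpha(\tau)},
\]
which sends $[0,L)$ bijectively onto $[0,\infty)$ since $F_\alpha$ vanishes linearly at the pole. Setting $U(\xi) = w(t(\xi))$ and using $dt = F_\alpha\, d\xi$, the Rayleigh quotient defining $\lambda_{k,n}(\alpha)$ becomes
\[
\frac{\int_0^\infty [U_\xi^2 + k^2 U^2]\, d\xi}{\int_0^\infty V_\alpha(\xi)\, U^2\, d\xi}, \qquad V_\alpha(\xi) = F_\alpha(t(\xi))^2,
\]
with $U$ continuously differentiable on $[0,\infty)$ satisfying $U(0)=0$. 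The analogous substitution for $\omega$ gives $\xi_\omega(s) = \log(R/(R-s))$ and weight $V_\omega(\xi) = R^2 e^{-2\xi}$, with the same admissible class. The key pointwise estimate
\[
V_\alpha(\xi) \geq V_\omega(\xi) \quad \text{for every } \xi \geq 0,
\]
with strict inequality on some $(\xi_0,\infty)$ whenever $\alpha \neq \omega$, comes from $\frac{d}{d\xi}\log F_\alpha(t(\xi)) = F_\alpha'(t(\xi)) \in [-1,1]$ (the arc-length hypothesis): integrating from $F_\alpha(0) = R$ gives $F_\alpha(t(\xi)) \geq R e^{-\xi}$, with equality on an initial interval only when $F_\alpha' \equiv -1$, that is $G_\alpha' \equiv 0$, so that the meridian lies on the axis there, and once strict inequality begins it persists.

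Then I would apply the min-max principle using the $\omega$-eigenfunctions as trial functions. Let $W_\omega$ be the $n$-dimensional span of $U_j^\omega(\xi) = J_k(j_{k,j}\, e^{-\xi})$, $j = 1,\ldots,n$, normalized by $\int V_\omega U_i^\omega U_j^\omega\, d\xi = \delta_{ij}$. For $U = \sum_j c_j U_j^\omega$, integration by parts against the ODE gives $\int [U_\xi^2 + k^2 U^2]\, d\xi = \sum_j c_j^2\, \lambda_{k,j}(\omega)$, and $V_\alpha \geq V_\omega$ gives $\int V_\alpha U^2\, d\xi \geq \int V_\omega U^2\, d\xi = \sum_j c_j^2$, so
\[
\frac{\int [U_\xi^2 + k^2 U^2]\, d\xi}{\int V_\alpha U^2\, d\xi} \leq \frac{\sum_j c_j^2\, \lambda_{k,j}(\omega)}{\sum_j c_j^2} \leq \lambda_{k,n}(\omega).
\]
When $\alpha \neq \omega$ this is strict on all of $W_\omega \setminus \{0\}$: if some $c_j$ with $j<n$ is nonzero the second inequality is strict, and if $U$ is a nonzero multiple of $U_n^\omega$ the first inequality is strict because $U_n^\omega$ has only isolated zeros while $V_\alpha - V_\omega > 0$ throughout $(\xi_0,\infty)$. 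Compactness of the unit sphere of $W_\omega$ then upgrades these pointwise strict inequalities to a strict bound on the supremum, giving $\lambda_{k,n}(\alpha) < \lambda_{k,n}(\omega)$.

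The main technical obstacle I anticipate is justifying the $\xi$-substitution carefully: this requires verifying that the transplanted $U_j^\omega$ belong to the admissible class for the $\alpha$-problem, which amounts to controlling $V_\alpha(U_j^\omega)^2$ at $\xi = \infty$ from the asymptotic behavior of $F_\alpha$ near the pole, and dealing separately with the case $k = 0$, where $U_j^\omega$ does not decay at infinity.
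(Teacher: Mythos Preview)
Your approach is correct and takes a genuinely different route from the paper's.  The paper argues geometrically in several stages: it fixes a threshold radius $\mu=K/\sqrt{\lambda_{K,N}(\omega)}$, flattens the meridian outside that threshold by projecting onto the $x$-axis (Lemma~\ref{ab}), replaces $F_\beta$ by its running minimum and reparametrizes by arc length (Lemmas~\ref{bg} and~\ref{gz}), and then for $K\ge 1$ unrolls the remaining inner piece to a straight segment by a $C^1$ homotopy (Lemma~\ref{unroll}), tracking the first eigenvalue along the homotopy via a Dini-derivative computation whose sign is governed by $|\phi'|^2-K^2\phi^2/F^2\le 0$ on the inner region (Lemmas~\ref{efes} and~\ref{xo}).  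Your Liouville-type substitution $\xi=\int_0^t d\tau/F_\alpha$ bypasses all of this: it sends every meridian to the same half-line problem with the \emph{same} numerator $\int_0^\infty(U_\xi^2+k^2U^2)\,d\xi$, so the whole comparison collapses to the single pointwise weight inequality $V_\alpha\ge V_\omega$, which you read off from $\tfrac{d}{d\xi}\log F_\alpha=F_\alpha'\ge -1$.  This is markedly more elementary---it handles $k=0$ and $k\ge 1$ uniformly, avoids the homotopy and Dini-derivative machinery entirely, and the strictness via compactness of the unit sphere in $W_\omega$ is cleaner than the paper's case split.  What the paper's route offers in return is a transparent geometric picture (each step is a visible deformation of the surface) and an explicit identification of where the Bessel threshold $\mu<R$ enters; in your argument this information is hidden in the tail behaviour of the weight.

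The technical obstacles you flag are in fact mild.  For the integrability of $V_\alpha\,(U_j^\omega)^2$ at $\xi=\infty$, note simply that $\int_0^\infty V_\alpha\,d\xi=\int_0^L F_\alpha\,dt<\infty$ while each $U_j^\omega$ is bounded, so no separate treatment of $k=0$ is needed.  And the pulled-back trial functions $w_j=U_j^\omega\circ\xi_\alpha$ vanish at $0$ and are locally Lipschitz on $[0,L)$, hence lie in the class $\Lip_0(0,L)$ used in the paper's extended min-max, with no regularity at the pole required; the bound $e^{-\xi}/F_\alpha\le 1/R$ (equivalent to $V_\alpha\ge V_\omega$) already shows $w_j'$ is bounded.
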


To prove this, we define a neighborhood of the boundary $\del \R^2_+$ and treat the segments of the curve outside and inside of this neighborhood seperately.
For the exterior segment, we simply project $\alpha$ orthogonally onto $\omega$ and observe that this increases the eigenvalue.
For the interior segment, we unroll the curve to $\omega$ and see that this increases the eigenvalue as well.

\section{Proof}

We first extend the definition of the functionals $\lambda_{k,n}$ to Lipschitz curves.
Let $[a,b]$ be a finite, closed interval and let $\psi:[a,b] \to \R^2_+$ be a Lipschitz curve.
Write $\psi=(F_\psi,G_\psi)$.
Assume that $F_\psi$ is positive over $[a,b)$.
Let $\Lip_0(a,b)$ be the set of continuous functions $w:[a,b) \to \R$ which vanish at $a$ and are Lipschitz over $[a,c]$ for every $c$ in $(a,b)$.
For a non-negative integer $k$ and a positive integer $n$, define
\[
	\lambda_{k,n}(\psi) = \inf_W \max_{w \in W} \frac{\int_a^b \frac{| w' |^2 F_\psi}{| \psi' |} + \frac{k^2 w^2 | \psi' |}{F_\psi} \,dt}{\int_a^b w^2 F_\psi | \psi' | \,dt}
\]
Here the infimum is taken over all $n$-dimensional subspaces $W$ of $\Lip_0(a,b)$.
Let $H_0^1(\psi,k)$ be the set of continuous functions $w:[a,b) \to \R$ which vanish at $a$ and have a weak derivative such that
\[
	\int_a^b \frac{| w' |^2 F_\psi}{| \psi' |} + \frac{k^2 w^2 | \psi' |}{F_\psi} \,dt < \infty
\]

In the following lemma, we note that if $\psi$ is a regular piecewise continuously differentiable curve which meets the axis transversally, then the infimum in the defintion of the functionals $\lambda_{k,n}$ is attained.

\begin{Lemma}
\label{efs}
Let $\psi: [a,b] \to \R^2_+$ be a piecewise continuously differentiable curve.
Assume there is a positive constant $c$ such that for all $t$ in $[a,b]$,
\[
	| \psi'(t) | \ge c
\]
Write $\psi=(F_\psi, G_\psi)$.
Assume that $F_\psi$ is positive over $[a,b)$.
Assume that $F_\psi(b)=0$ and $F_\psi'(b)<0$.
Let $k$ be a non-negative integer.
Then there are functions
\[
	\phi_{k,1}, \phi_{k,2}, \phi_{k,3}, \ldots
\]
which form an orthonormal basis of $H_0^1(\psi,k)$ such that, for any positive integer $n$,
\[
	\lambda_{k,n}(\psi) = \frac{\int_a^b \frac{| \phi_{k,n}' |^2 F_\psi}{| \psi' |} + \frac{k^2 \phi_{k,n}^2 | \psi' |}{F_\psi} \,dt}{\int_a^b \phi_{k,n}^2 F_\psi | \psi' | \,dt}
\]
Each function $\phi_{k,n}$ has exactly $n-1$ roots in $(a,b)$ and satisfies the following equation weakly:
\[
	\bigg( \frac{ F_\psi \phi_{k,n}' } {|\psi'|} \bigg)' = \frac{k^2 | \psi' | \phi_{k,n}}{ F_\psi} - \lambda_{k,n}(\psi) F_\psi | \psi' | \phi_{k,n}
\]
Also,
\[
	\lambda_{k,1}(\psi) < \lambda_{k,2}(\psi) < \lambda_{k,3}(\psi) < \ldots
\]
\end{Lemma}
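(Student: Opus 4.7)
The plan is to treat this as a singular Sturm-Liouville problem on $[a,b]$, with the only nonstandard feature being the degeneracy of the weights at the endpoint $b$. First I would give $H_0^1(\psi,k)$ a Hilbert space structure using the quadratic form
\[
Q(w) = \int_a^b \frac{|w'|^2 F_\psi}{|\psi'|} + \frac{k^2 w^2 |\psi'|}{F_\psi}\,dt,
\]
augmented by $\int_a^b w^2 F_\psi|\psi'|\,dt$ when $k=0$. Since $F_\psi$ and $|\psi'|$ are bounded above and bounded below away from zero on any $[a,c]$ with $c<b$, and since $w(a)=0$, a one-dimensional Poincar\'e inequality on $[a,c]$ combined with the fact that the measure $F_\psi|\psi'|\,dt$ has vanishing mass near $b$ shows that $Q$ controls the weighted $L^2$ norm globally. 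Completeness of $H_0^1(\psi,k)$ then follows routinely.

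Next I would show that the embedding $H_0^1(\psi,k) \hookrightarrow L^2(F_\psi|\psi'|\,dt)$ is compact. On $[a,c]$ with $c<b$ this is the classical Rellich-Kondrachov theorem, since the weights are uniformly positive and bounded there. On $[c,b]$, the hypothesis $F_\psi'(b)<0$ gives $F_\psi(t) \sim |F_\psi'(b)|(b-t)$ near $b$, so the measure $F_\psi|\psi'|\,dt$ carries small total mass, and a pointwise bound on $\sup_{[c,b]}|w|$ in terms of $Q(w)^{1/2}$ via the fundamental theorem of calculus controls the weighted $L^2$ norm on $[c,b]$ uniformly as $c \to b$. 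With compactness in hand, the spectral theorem for compact positive self-adjoint operators produces an orthonormal eigenbasis $\phi_{k,n}$ with eigenvalues $\lambda_{k,n}(\psi) \to \infty$; the min-max characterization is Courant-Fischer, applied after noting that $\Lip_0(a,b)$ is dense in $H_0^1(\psi,k)$ by mollification away from $b$ combined with a smooth cutoff near $b$; and the weak ODE falls out of taking variations in the Rayleigh quotient.

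Simplicity of each $\lambda_{k,n}(\psi)$, and hence the strict inequality $\lambda_{k,n}(\psi) < \lambda_{k,n+1}(\psi)$, follow from ODE uniqueness: on any $[a,b-\epsilon]$ the equation is a regular Sturm-Liouville equation, so a solution vanishing at $a$ is determined by its derivative there, which forces each eigenspace to be one-dimensional. For the nodal count, I would apply the classical Sturm oscillation theorem on $[a,b-\epsilon]$ with a Dirichlet condition imposed at $b-\epsilon$, then pass to $\epsilon \to 0$ using continuous dependence of the truncated eigenvalues and eigenfunctions on $\epsilon$. The main obstacle throughout is the degenerate endpoint at $b$: coercivity, compactness, regularity of the eigenfunctions up to $b$, and the zero count must all be reduced to the standard theory on $[a,c]$ together with a careful endpoint analysis on $[c,b]$. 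The assumption $F_\psi'(b)<0$ is precisely what makes $b$ a regular singular point of the ODE and keeps this endpoint analysis tractable.
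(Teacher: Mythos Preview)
Your outline is correct and is precisely the standard singular Sturm--Liouville argument the paper has in mind: in fact the paper omits the proof entirely, writing only ``We omit the proof which is standard and refer to Gilbarg and Trudinger [GT] and Zettl [Z].'' Your sketch---coercivity and completeness of $H_0^1(\psi,k)$, compactness of the embedding into the weighted $L^2$ space via Rellich on $[a,c]$ plus a tail estimate on $[c,b]$, the spectral theorem, simplicity from ODE uniqueness at the regular endpoint $a$, and the nodal count from Sturm oscillation---is exactly what those references supply, so there is nothing to compare.
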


We omit the proof which is standard and refer to Gilbarg and Trudinger \cite{GT} and Zettl \cite{Z}.

Now fix a non-negative integer $K$ and a positive integer $N$, for the remainder of the article.
Let
\[
	\mu=\frac{K}{\sqrt{\lambda_{K,N}(\omega)}}
\]
The inequality $\mu < R$ is a basic fact about Bessel functions \cite{W}.
Let $\alpha$ be as defined in the introduction, and let
\[
	A = \min \bigg\{ t \in [0,L] : F_\alpha(t) = \mu \bigg\}
\]
Define $\beta:[0,L] \to \R^2_+$ to be a piecewise continuously differentiable function such that $\beta(0)=(R,0)$ and
\[
	\beta'(t) =
	\begin{cases}
		(F_\alpha'(t),0) & t \in [0, A) \\
		(F_\alpha'(t),G_\alpha'(t)) & t \in (A, L] \\
	\end{cases} 
\]

\begin{Lemma}
\label{ab}
Assume $\alpha$ is not equal to $\beta$ and $\lambda_{K,N}(\alpha) \ge \lambda_{K,N}(\omega)$.
Then
\[
	\lambda_{K,N}(\alpha) < \lambda_{K,N}(\beta)
\]
\end{Lemma}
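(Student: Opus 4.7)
The plan is to prove the strict inequality in three stages: first, derive a pointwise identity comparing $N_\beta-\lambda D_\beta$ and $N_\alpha-\lambda D_\alpha$ evaluated on any test function $w$, where $\lambda = \lambda_{K,N}(\alpha)$ and $N_\psi$, $D_\psi$ denote the numerator and denominator in the definition of $\lambda_{K,N}(\psi)$; second, use the resulting nonnegative inequality together with min-max to obtain the weak inequality $\lambda_{K,N}(\beta) \ge \lambda$; third, sharpen to strict inequality by analyzing the equality case and invoking unique continuation for the $\alpha$-Sturm--Liouville equation.

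For the identity, I would split each integral at $t=A$. On $(A,L]$, $|\beta'|=|\alpha'|=1$ and $F_\beta = F_\alpha$, so the contributions from this interval cancel. On $[0,A]$, $F_\beta = F_\alpha$ and $|\beta'| = h := |F_\alpha'| \le 1$, and a direct computation gives
\[
\bigl(N_\beta(w) - \lambda D_\beta(w)\bigr) - \bigl(N_\alpha(w) - \lambda D_\alpha(w)\bigr) = \int_0^A (1-h)\left[ \frac{|w'|^2 F_\alpha}{h} + w^2 \Bigl( \lambda F_\alpha - \frac{K^2}{F_\alpha} \Bigr) \right] dt.
\]
By the minimality of $A$, $F_\alpha \ge \mu$ on $[0,A]$, and since $\lambda \ge \lambda_{K,N}(\omega) = K^2/\mu^2$ by hypothesis, the factor $\lambda F_\alpha - K^2/F_\alpha$ is nonnegative there. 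Hence the integrand is pointwise nonnegative.

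To deduce $\lambda_{K,N}(\beta) \ge \lambda$, I would apply Lemma~\ref{efs} to the arc-length reparametrization of $\beta$ (the reparametrization is needed because $|\beta'|$ can vanish where $F_\alpha'$ does) to produce eigenfunctions $\psi_1,\ldots,\psi_N$ for the $\beta$-problem. Argue by contradiction: if $\lambda_{K,N}(\beta) \le \lambda$, the Rayleigh quotient of $\beta$ is at most $\lambda$ on $W_\beta := \mathrm{span}\{\psi_1,\ldots,\psi_N\}$, and the identity above then forces the Rayleigh quotient of $\alpha$ to be at most $\lambda$ on $W_\beta$ as well. Thus $W_\beta$ is a minimizer in the min-max for $\lambda_{K,N}(\alpha)$; because the eigenvalues of $\alpha$ are simple (Lemma~\ref{efs}), this minimizer is unique, so $W_\beta$ must equal the span $W_\alpha$ of the first $N$ eigenfunctions of $\alpha$. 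In particular, the $N$-th $\alpha$-eigenfunction $\phi_{K,N}$ lies in $W_\beta$.

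For the strict inequality, I would evaluate the identity at $w = \phi_{K,N}$. Both sides vanish, so the integrand on the right must vanish a.e.\ on $[0,A]$. Since $\alpha \ne \beta$, there is some $t \in [0,A]$ with $G_\alpha'(t) \ne 0$, whence $h(t) < 1$; by continuity, $\{h<1\} \cap [0,A)$ contains a nontrivial open subinterval $I$. On $I$, $F_\alpha > \mu$ (again by minimality of $A$), so $\lambda F_\alpha - K^2/F_\alpha > 0$, and $1 - h > 0$; vanishing of the integrand then forces $\phi_{K,N} \equiv 0$ (and hence $\phi_{K,N}' \equiv 0$) on $I$. Because $\phi_{K,N}$ satisfies the regular second-order linear ODE $(F_\alpha \phi')' = K^2 \phi/F_\alpha - \lambda F_\alpha \phi$ on $(0,L)$, whose coefficients are smooth where $F_\alpha > 0$, Cauchy uniqueness propagates the vanishing globally, contradicting that $\phi_{K,N}$ is a nonzero eigenfunction. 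The main obstacle I expect is precisely this last step: squeezing strict inequality out of a merely nonnegative identity requires identifying the equality case exactly and then invoking ODE uniqueness to globalize the induced vanishing of $\phi_{K,N}$; a subsidiary technical issue is the possible vanishing of $|\beta'|$, which must be handled by arc-length reparametrization before Lemma~\ref{efs} can be invoked on the $\beta$-side.
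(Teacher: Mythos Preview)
Your argument is correct and rests on the same pointwise identity the paper exploits, but the execution differs in two respects worth noting.

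First, to produce an $N$-dimensional test space on the $\beta$ side you invoke Lemma~\ref{efs} on the arc-length reparametrization of $\beta$. This is not quite airtight: on $[0,A]$ one has $\beta(t)=(F_\alpha(t),0)$, so the reparametrized curve is piecewise linear with a breakpoint at every sign change of $F_\alpha'$, and nothing in the hypotheses excludes infinitely many such changes. Lemma~\ref{efs} as stated then does not apply. The paper avoids this by inserting, for each $p\in(0,1)$, the \emph{regular} intermediate curve $\alpha_p$ with $\alpha_p'=(F_\alpha',\,pG_\alpha')$ on $[0,A)$; since $|\alpha_p'|\ge p>0$, Lemma~\ref{efs} applies directly to $\alpha_p$. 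The paper then proves $\lambda_{K,N}(\alpha)<\lambda_{K,N}(\alpha_p)$ strictly and $\lambda_{K,N}(\alpha_p)\le\lambda_{K,N}(\beta)$ weakly, using exactly your monotonicity computation at each step.

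Second, for strictness the paper does not need to identify the minimizing subspace or invoke ODE uniqueness. It takes $\Phi$ to be the span of the first $N$ eigenfunctions of $\alpha_p$, picks $v\in\Phi$ maximizing the $\alpha$-quotient over $\Phi$, and observes that either the comparison inequality is strict, or $v$ vanishes on an open set; but the only maximizers of the $\alpha_p$-quotient on $\Phi$ are multiples of $\phi_{K,N}$, which has exactly $N-1$ zeros by Lemma~\ref{efs}. Either way the chain is strict. Your route through $W_\beta=W_\alpha$ and Cauchy uniqueness for the $\alpha$-equation also works (and your min-max argument that $\phi_{K,N}\in W_\beta$ is correct), but it is heavier machinery for the same conclusion. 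The paper's device of interpolating via $\alpha_p$ buys both the regularity needed for Lemma~\ref{efs} and a cleaner strictness argument in one move.
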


\begin{proof}
Fix a number $p$ in $(0,1)$.
Define $\alpha_p:[0,L] \to \R^2_+$ to be a regular piecewise continuously differentiable curve such that $\alpha_p(0)=(R,0)$ and
\[
	\alpha_p'(t) =
	\begin{cases}
		(F_\alpha'(t),pG_\alpha'(t)) & t \in [0,A) \\
		(F_\alpha'(t),G_\alpha'(t)) & t \in (A,L] \\
	\end{cases} 
\]
We first show that
\[
	\lambda_{K,N}(\alpha) < \lambda_{K,N}(\alpha_p)
\]
By Lemma \ref{efs}, there is a $N$-dimensional subspace $\Phi$ of $H_0^1(\alpha_p,K)$ such that
\[
	\lambda_{K,N}(\alpha_p) = \max_{w \in \Phi} \frac{\int_0^L \frac{| w' |^2 F_\alpha}{| \alpha_p' |} + \frac{K^2 w^2 | \alpha_p' |}{F_\alpha} \,dt}{\int_0^L w^2 F_\alpha | \alpha_p' | \,dt}
\]
Moreover $\Phi$ is contained in $\Lip_0(0,L)$ and the maximum over $\Phi$ is only attained by scalar multiples of a function $\phi_{K,N}$ which has exactly $N-1$ roots in $(0,L)$.
Let $v$ be a function in $\Phi$ such that
\[
	\frac{\int_0^L \frac{| v' |^2 F_\alpha}{| \alpha' |} + \frac{K^2 v^2 | \alpha' |}{F_\alpha} \,dt}{\int_0^L v^2 F_\alpha | \alpha' | \,dt}
		= \max_{w \in \Phi} \frac{\int_0^L \frac{| w' |^2 F_\alpha}{| \alpha' |} + \frac{K^2 w^2 | \alpha' |}{F_\alpha} \,dt}{\int_0^L w^2 F_\alpha | \alpha' | \,dt}
\]
Note this quantity is at least $\lambda_{K,N}(\alpha)$, which is at least $\lambda_{K,N}(\omega)$.
It follows that
\[
	\frac{\int_0^L \frac{| v' |^2 F_\alpha}{| \alpha' |} + \frac{K^2 v^2 | \alpha' |}{F_\alpha} \,dt}{\int_0^L v^2 F_\alpha | \alpha' | \,dt}
		\le \frac{\int_0^L \frac{| v' |^2 F_\alpha}{| \alpha_p' |} + \frac{K^2 v^2 | \alpha_p' |}{F_\alpha} \,dt}{\int_0^L v^2 F_\alpha | \alpha_p' | \,dt}
\]
If equality holds, then $v$ must vanish on a set of positive measure.
In either case, we obtain
\[
	\lambda_{K,N}(\alpha) \le \frac{\int_0^L \frac{| v' |^2 F_\alpha}{| \alpha' |} + \frac{K^2 v^2 | \alpha' |}{F_\alpha} \,dt}{\int_0^L v^2 F_\alpha | \alpha' | \,dt} < \lambda_{K,N}(\alpha_p)
\]

Now we repeat the argument to obtain
\[
	\lambda_{K,N}(\alpha_p) \le \lambda_{K,N}(\beta)
\]
Let $\epsilon>0$.
There is an $N$-dimensional subspace $W$ of $\Lip_0(0,L)$ such that
\[
	\max_{w \in W} \frac{\int_0^L \frac{|w'|^2 F_\alpha}{| \beta '|} + \frac{K^2 w^2 | \beta '|}{F_\alpha} \,dt}{\int_0^1 w^2 F_\alpha | \beta' | \,dt} < \lambda_{K,N}(\beta) + \epsilon
\]
Let $u$ be a function in $W$ such that
\[
	\frac{\int_0^L \frac{| u' |^2 F_\alpha}{| \alpha_p' |} + \frac{K^2 u^2 | \alpha_p' |}{F_\alpha} \,dt}{\int_0^L u^2 F_\alpha | \alpha_p' | \,dt}
		= \max_{w \in W} \frac{\int_0^L \frac{| w' |^2 F_\alpha}{| \alpha_p' |} + \frac{K^2 w^2 | \alpha_p' |}{F_\alpha} \,dt}{\int_0^L w^2 F_\alpha | \alpha_p' | \,dt}
\]
Note this quantity is at least $\lambda_{K,N}(\alpha_p)$, which is at least $\lambda_{K,N}(\omega)$.
It follows that
\[
	\frac{\int_0^L \frac{| u' |^2 F_\alpha}{| \alpha_p' |} + \frac{K^2 u^2 | \alpha_p' |}{F_\alpha} \,dt}{\int_0^L u^2 F_\alpha | \alpha_p' | \,dt}
		\le \frac{\int_0^L \frac{| u' |^2 F_\alpha}{| \beta' |} + \frac{K^2 u^2 | \beta' |}{F_\alpha} \,dt}{\int_0^L u^2 F_\alpha | \beta' | \,dt}
\]
Now we obtain
\[
	\lambda_{K,N}(\alpha_p) \le \lambda_{K,N}(\beta) + \epsilon
\]
Therefore,
\[
	\lambda_{K,N}(\alpha) < \lambda_{K,N}(\beta)
\]
\end{proof}

Write $\beta=(F_\beta, G_\beta)$.
Define $F_\gamma:[0,L] \to \R$ by
\[
	 F_\gamma(t) =
	 \begin{cases}
		\min \{ F_\beta(s) : s \in [0,t] \} & t \in [0,A] \\
	 	F_\beta & t \in [A,L] \\
	\end{cases}
\]
Let $G_\gamma=G_\beta$.
Let $\gamma=(F_\gamma, G_\gamma)$.
Note that $\gamma:[0,L] \to \R^2_+$ is Lipschitz.

\begin{Lemma}
\label{bg}
Assume $\lambda_{K,N}(\beta) \ge \lambda_{K,N}(\omega)$.
Then
\[
	\lambda_{K,N}(\beta) \le \lambda_{K,N}(\gamma)
\]
\end{Lemma}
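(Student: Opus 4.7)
The plan is to feed admissible trial functions for $\gamma$ directly into the Rayleigh quotient of $\beta$ and compare the two outputs pointwise, then conclude by a case split on $\lambda_{K,N}(\gamma)$ versus $\lambda_{K,N}(\omega)$. Write
\[
	Q_\psi(\phi)=\frac{N_\psi(\phi)}{D_\psi(\phi)},
	\quad\text{where}\quad
	N_\psi(\phi)=\int_0^L \frac{|\phi'|^2 F_\psi}{|\psi'|}+\frac{K^2\phi^2|\psi'|}{F_\psi}\,dt,
	\quad
	D_\psi(\phi)=\int_0^L \phi^2 F_\psi|\psi'|\,dt.
\]
Because $\beta$ and $\gamma$ agree on $[A,L]$, and because $G_\beta\equiv 0\equiv G_\gamma$ on $[0,A]$, one has $|\beta'|=|F_\beta'|$ and $|\gamma'|=|F_\gamma'|$ on the head. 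Since $F_\gamma$ is the running minimum of $F_\beta$, the head splits into the maximal plateaus $I_i\subset[0,A]$ of $F_\gamma$ (with $F_\gamma\equiv c_i$ on $I_i$, and necessarily $c_i>\mu$ since $A$ is the first time $F_\beta$ reaches $\mu$) and its complement, on which $F_\gamma=F_\beta$ and $|F_\gamma'|=|F_\beta'|$. Any $\phi\in\Lip_0(0,L)$ with $Q_\gamma(\phi)<\infty$ is forced to be a constant $\phi_0(i)$ on each $I_i$; off the plateaus the $\beta$- and $\gamma$-integrands coincide, so only the plateau excess remains:
\[
	N_\beta(\phi)-N_\gamma(\phi)=\sum_i K^2\phi_0(i)^2\int_{I_i}\frac{|F_\beta'|}{F_\beta}\,dt,
	\qquad
	D_\beta(\phi)-D_\gamma(\phi)=\sum_i \phi_0(i)^2\int_{I_i} F_\beta|F_\beta'|\,dt.
\]

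The key bound comes from the coarea formula applied to $F_\beta$ on each $I_i$: since $F_\beta\ge c_i>\mu$ on $I_i$ and $1/x\le x/\mu^2$ for $x\ge\mu$ (strict for $x>\mu$),
\[
	K^2\int_{I_i}\frac{|F_\beta'|}{F_\beta}\,dt\le \lambda_{K,N}(\omega)\int_{I_i} F_\beta|F_\beta'|\,dt,
\]
strictly whenever the two integrals are nonzero. Summing with weights $\phi_0(i)^2$ and invoking the mediant inequality yields
\[
	Q_\beta(\phi)\le \max\bigl(Q_\gamma(\phi),\,\lambda_{K,N}(\omega)\bigr),
\]
and this is strict whenever $Q_\gamma(\phi)<\lambda_{K,N}(\omega)$ and $\phi$ is nontrivial on some nontrivial plateau.

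To conclude, pick an $N$-dimensional subspace $\Phi$ of admissible trial functions whose maximum of $Q_\gamma$ is arbitrarily close to $\lambda_{K,N}(\gamma)$; such $\Phi$ exist by applying Lemma \ref{efs} to the regular piecewise continuously differentiable curve obtained from $\gamma$ by collapsing each plateau to a point, noting that this collapse does not change the Rayleigh functional on admissible functions. If $\lambda_{K,N}(\gamma)\ge\lambda_{K,N}(\omega)$, the displayed bound gives $\max_\Phi Q_\beta\le \max_\Phi Q_\gamma$, and passing to the infimum gives $\lambda_{K,N}(\beta)\le \lambda_{K,N}(\gamma)$, as desired. If instead $\lambda_{K,N}(\gamma)<\lambda_{K,N}(\omega)$, choose $\Phi$ with $\max_\Phi Q_\gamma<\lambda_{K,N}(\omega)$; the strict form of the mediant bound together with compactness of the unit sphere in the finite-dimensional $\Phi$ forces $\max_\Phi Q_\beta<\lambda_{K,N}(\omega)$, hence $\lambda_{K,N}(\beta)<\lambda_{K,N}(\omega)$, contradicting the hypothesis. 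This second case is therefore impossible and the lemma follows.

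The main obstacle is the plateau analysis: the coarea/pointwise estimate on each $I_i$ must be carried out so that strictness is available in the second case of the case split, which rests on the observation $c_i>\mu$ that uses the minimality of $A$ in a crucial way. A secondary technical point is that $\gamma$ is only Lipschitz, so Lemma \ref{efs} does not apply directly; this is handled by the plateau-collapsing identification above, which produces an equivalent regular curve with the same Rayleigh functional.
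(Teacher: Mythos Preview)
Your proof is correct and follows essentially the same route as the paper: decompose $[0,L]$ into the plateaus of $F_\gamma$ and their complement, observe that admissible test functions are constant on each plateau, use the pointwise bound $K^2/F_\beta^2\le\lambda_{K,N}(\omega)$ (equivalent to $F_\beta\ge\mu$) on the plateaus, and combine via a mediant inequality. The paper packages this as a single proof by contradiction (assume $\lambda_{K,N}(\beta)>\lambda_{K,N}(\gamma)$, pick $W$ with $\max_W Q_\gamma<\lambda_{K,N}(\beta)$, deduce $\max_W Q_\beta<\lambda_{K,N}(\beta)$) rather than your case split, and it simply invokes the definition of $\lambda_{K,N}(\gamma)$ as an infimum to obtain the trial subspace—your detour through Lemma~\ref{efs} via plateau-collapsing is unnecessary, since you never need the infimum to be attained.
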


\begin{proof}
Define
\[
	V = \bigg\{ t \in [0,A] : F_\beta(t) \neq F_\gamma(t) \bigg\}
\]
By the Riesz sunrise lemma, there are disjoint open intervals $(a_i, b_i)$ such that
\[
	V = \bigcup_i (a_i, b_i)
\]
and $F_\gamma$ is constant over each interval.
Suppose $\lambda_{K,N}(\beta) > \lambda_{K,N}(\gamma)$.
Then there is a $N$-dimensional subspace $W$ of $\Lip_0(0,L)$ such that
\[
	\max_{w \in W} \frac{ \int_0^L \frac{| w' |^2 F_\gamma}{|\gamma'|} + \frac{K^2 w^2 | \gamma'|}{F_\gamma} \,dt}{\int_0^L | w |^2 F_\gamma |\gamma'| \,dt} < \lambda_{K,N}(\beta)
\]
Note that over each interval $(a_i,b_i)$, the function $| \gamma' |$ is zero, so each $w$ in $W$ is constant.
Let $J=[0,L] \setminus V$.
The isolated points of $J$ are countable, so at almost every point in $J$, the curve $\gamma$ is differentiable with $\gamma'=\beta'$.
If $w$ is a non-zero function in $W$, then $w$ cannot vanish identically on $J$, and
\[
	\frac{ \int_J \frac{| w' |^2 F_\beta}{|\beta'|} + \frac{K^2 w^2 | \beta' |}{F_\beta} \,dt}{\int_J | w |^2 F_\beta |\beta'| \,dt}
		= \frac{ \int_0^L \frac{| w' |^2 F_\gamma}{|\gamma'|} + \frac{K^2 w^2 | \gamma' |}{F_\gamma} \,dt}{\int_0^L | w |^2 F_\gamma |\gamma'| \,dt} < \lambda_{K,N}(\beta)
\]
Also for every $w$ in $W$,
\[
	\int_V \frac{| w' |^2 F_\beta}{|\beta'|} + \frac{K^2 w^2 | \beta' |}{F_\beta} \,dt
		= \int_V  \frac{K^2 w^2 | \beta' |}{F_\beta} \,dt \le \lambda_{K,N}(\omega) \int_V | w |^2 F_\beta |\beta'| \,dt
\]
Here the inequality is strict unless $w$ is identically zero over $V$.
It follows that
\[
	\max_{w \in W} \frac{ \int_0^L \frac{| w' |^2 F_\beta}{|\beta'|} + \frac{K^2 w^2 | \beta'|}{F_\beta} \,dt}{\int_0^L | w |^2 F_\beta |\beta'| \,dt} < \lambda_{K,N}(\beta)
\]
This is a contradiction.
\end{proof}

Let $L^*$ be the length of $\gamma$.
Define $\ell:[0,L] \to [0,L^*]$ by
\[
	\ell(t) = \int_0^t | \gamma'(u)| \,du
\]
Define $\rho:[0,L^*] \to [0,L]$ by
\[
	\rho(s) = \min \Big\{ t \in [0,L] : \ell(t)=s \Big\}
\]
This function $\rho$ need not be continuous, but $\zeta = \gamma \circ \rho$ is piecewise continuously differentiable, and for all $t$ in $[0,L]$,
\[
	\zeta(\ell(t)) = \gamma(t)
\]
Morover $\zeta$ is parametrized by arc length.

\begin{Lemma}
\label{gz}
This reparametrization satisfies
\[
	\lambda_{K,N}(\gamma) \le \lambda_{K,N}(\zeta)
\]
\end{Lemma}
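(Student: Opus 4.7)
The strategy is to show that the Rayleigh quotient is exactly preserved under the arc-length reparametrization, so that every admissible test subspace for $\zeta$ can be pulled back to an admissible test subspace for $\gamma$ with the same Rayleigh-quotient spectrum. Concretely, given any $N$-dimensional subspace $W^* \subset \Lip_0(0, L^*)$, I would produce $W \subset \Lip_0(0, L)$ of the same dimension via composition with $\ell$, and verify that the induced Rayleigh quotients coincide.

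Fix $\epsilon>0$ and choose $W^* \subset \Lip_0(0,L^*)$ of dimension $N$ so that the maximum Rayleigh quotient for $\zeta$ on $W^*$ is below $\lambda_{K,N}(\zeta)+\epsilon$ (using $|\zeta'|\equiv 1$). Set $W = \{ w^* \circ \ell : w^* \in W^* \}$. Since $\ell$ is continuous and surjective onto $[0,L^*]$, the map $w^* \mapsto w^* \circ \ell$ is injective, so $\dim W = N$; each $w = w^* \circ \ell$ vanishes at $0$ and is Lipschitz as a composition of Lipschitz maps, hence $W \subset \Lip_0(0,L)$. By the chain rule for Lipschitz functions,
\[
w'(t) = (w^*)'(\ell(t))\,|\gamma'(t)| \quad \text{for a.e. } t,
\]
and in particular $w'=0$ almost everywhere on the set where $|\gamma'|=0$, because $\ell$ is constant and hence $w$ is constant on each component of that set. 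Substituting $s = \ell(t)$, $ds = |\gamma'(t)|\,dt$, and interpreting $|w'|^2 F_\gamma / |\gamma'|$ as zero on the degenerate set, one finds
\[
\int_0^L \frac{|w'|^2 F_\gamma}{|\gamma'|} + \frac{K^2 w^2 |\gamma'|}{F_\gamma}\,dt = \int_0^{L^*} |(w^*)'|^2 F_\zeta + \frac{K^2 (w^*)^2}{F_\zeta}\,ds,
\]
\[
\int_0^L w^2 F_\gamma |\gamma'|\,dt = \int_0^{L^*} (w^*)^2 F_\zeta \,ds.
\]
Thus the Rayleigh quotient of $w$ with respect to $\gamma$ equals that of $w^*$ with respect to $\zeta$, so
\[
\lambda_{K,N}(\gamma) \le \max_{w \in W}(\text{RQ}_\gamma) = \max_{w^* \in W^*}(\text{RQ}_\zeta) \le \lambda_{K,N}(\zeta) + \epsilon,
\]
and letting $\epsilon \to 0$ yields the lemma.

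The main obstacle is handling the degenerate intervals where $|\gamma'|=0$: on these intervals the integrand $|w'|^2 F_\gamma / |\gamma'|$ is formally $0/0$, and $\ell$ fails to be a bi-Lipschitz change of variables. The resolution is to notice that the pulled-back function $w = w^* \circ \ell$ is automatically constant on any such interval (since $\ell$ is), so $w'$ vanishes there almost everywhere, and the degenerate intervals contribute nothing to any of the three integrals appearing in the Rayleigh quotient for $\gamma$. Once this point is made precise via the Lipschitz chain rule and the co-area-type substitution $s=\ell(t)$, the remaining computation is a direct change of variables and the conclusion follows immediately from taking the infimum over $W^*$.
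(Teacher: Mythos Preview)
Your proposal is correct and follows essentially the same approach as the paper's proof: pull back test functions for $\zeta$ through $\ell$ to obtain test functions for $\gamma$, then use the change of variables $s=\ell(t)$ to see that the Rayleigh quotients coincide. The paper's argument is terser (it treats a single test function and simply asserts the change of variables), while you spell out the $\epsilon$-approximation, the injectivity of the pullback, and the handling of the degenerate set where $|\gamma'|=0$, but the underlying idea is identical.
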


\begin{proof}
Write $\gamma=(F_\gamma,G_\gamma)$ and $\zeta=(F_\zeta, G_\zeta)$.
Let $w$ be a function in $\Lip_0(0,L^*)$ such that
\[
	\frac{\int_0^{L^*} \frac{| w' |^2 F_\zeta}{|\zeta'|} + \frac{K^2 w^2 | \zeta' |}{F_\zeta} \,dt}{\int_0^{L^*} | w |^2 F_\zeta |\zeta'| \,dt} < \infty
\]
Define $v=w \circ \ell$.
Then $v$ is in $\Lip_0(0,L)$, and changing variables yields
\[
	\frac{ \int_0^L \frac{| v' |^2 F_\gamma}{|\gamma'|} + \frac{K^2 v^2 | \gamma' |}{F_\gamma} \,dt}{\int_0^L | v |^2 F_\gamma |\gamma'| \,dt}
	= \frac{\int_0^{L^*} \frac{| w' |^2 F_\zeta}{|\zeta'|} + \frac{K^2 w^2 | \zeta' |}{F_\zeta} \,dt}{\int_0^{L^*} | w |^2 F_\zeta |\zeta'| \,dt}
\]
It follows that $\lambda_{K,N}(\gamma) \le \lambda_{K,N}(\zeta)$.
\end{proof}

We can now prove Lemma 1 for the case $K=0$.

\begin{proof}[Proof of Lemma 1 for the case $K=0$]
Suppose $\alpha$ is not equal to $\omega$ and
\[
	\lambda_{K,N}(\alpha) \ge \lambda_{K,N}(\omega)
\]
Then $\alpha$ is not equal to $\beta$, so by Lemmas \ref{ab}, \ref{bg}, and \ref{gz}
\[
	\lambda_{K,N}(\alpha) < \lambda_{K,N}(\beta) \le \lambda_{K,N}(\gamma) \le \lambda_{K,N}(\zeta)
\]
But in this case, $\zeta = \omega$, so the proof is complete.
\end{proof}

For the remainder of the article, we assume that $K$ is positive.
Write $\zeta=(F_\zeta, G_\zeta)$.
Let $P=R-\mu$.
Let $\chi:[0,L^*] \to \R^2_+$ be a piecewise continuously differentiable function such that $\chi(0)=(R,0)$ and for $t$ in $[0,L^*]$ with $t \neq P$,
\[
	\chi'(t) = \Big( F_\zeta'(t), | G_\zeta '(t) | \Big)
\]
Then $\lambda_{K,N}(\zeta)=\lambda_{K,N}(\chi)$, trivially.
Write $\chi=(F_\chi, G_\chi)$.
Note that, for $t$ in $[0,P]$,
\[
	\chi(t) = R-t
\]
Also, for every $t$ in $[0,L^*]$ with $t \neq P$,
\[
	| \chi' | = 1
\]

Let $\Phi_{K,1}, \Phi_{K,2}, \ldots$ be the functions given by Lemma \ref{efs} associated to $\omega$.
Let $z_0$ be the largest root of $\Phi_{K,N}$ in $(0,R)$.
It follows from basic facts about Bessel functions \cite{W} that $z_0 < P$ and that $\Phi_{K,N}$ has no critical points in $[P,R)$.
There is a unique number $\Lambda$ such that there exists a function $u:[z_0,P] \to \R$ which is non-vanishing over $(z_0,P)$ and satisfies
\[
	\begin{cases}
		( \omega u' )' + ( \Lambda \omega - \frac{K^2}{\omega} ) u = 0 \\
		u(z_0) = 0 \\
		u'(P) =  0
	\end{cases}
\]
Moreover, 
\[
	\Lambda < \lambda_{K,N}(\omega)
\]
To compare $\lambda_{K,N}(\chi)$ and $\lambda_{K,N}(\omega)$, we need the following lemma.

\begin{Lemma}
\label{efes}
Let $Q$ and $z$ be real numbers with $z<z_0$ and $Q>P$.
Let $\psi:[z,Q] \to \R^2_+$ be continuously differentiable over $[P,Q]$.
Assume that, for $t$ in $[z,P]$,
\[
	\psi(t) = (R-t,0)
\]
Write $\psi=(F_\psi, G_\psi)$.
Assume that $F_\psi(Q)=0$ and $F_\psi$ is positive over $[z,Q)$.
Assume that $| \psi' |=1$ over $(P,Q)$ and that $F_\psi'(Q) < 0$.
Let $\phi$ be a function in $\Lip_0(z,Q)$ such that
\[
	\lambda_{K,1}(\psi) = \frac{ \int_z^Q | \phi' |^2 F_\psi + \frac{ K^2 \phi^2}{F_\psi} \,dt }{ \int_z^Q \phi^2 F_\psi \,dt }
\]
Assume that $\lambda_{K,1}(\psi) > \Lambda$.
Then
\[
	\lim_{t \to Q} \phi(t) = 0
\]
Also $\phi$ is differentiable over $[z,Q)$, and over $[P,Q)$,
\[
	| \phi' |^2 - \frac{K^2 \phi^2}{|F_\psi|^2 } \le 0
\]
Furthermore $\phi'$ and $\frac{\phi}{F_\psi}$ are bounded over $[z,Q)$.
\end{Lemma}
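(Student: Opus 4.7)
My plan is to establish the four assertions separately, using standard singular Sturm--Liouville theory together with one comparison argument that crucially uses the hypothesis $\lambda:=\lambda_{K,1}(\psi)>\Lambda$.

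First, since $\phi$ is a first eigenfunction of a regular Sturm--Liouville problem on $[z,c]$ for every $c<Q$, it has constant sign on $(z,Q)$; normalize so $\phi>0$. For the limit at $Q$, the finiteness of the Rayleigh quotient gives $\int_z^Q K^2\phi^2/F_\psi\,dt<\infty$, and $F_\psi(t)\sim c(Q-t)$ near $Q$ with $c:=-F_\psi'(Q)>0$; a nonzero limit of $\phi$ at $Q$ would produce a non-integrable singularity $\sim 1/(Q-t)$, so $\phi(t)\to 0$. Differentiability on $[z,Q)$ follows from the weak form $(F_\psi\phi')'=(K^2/F_\psi-\lambda F_\psi)\phi$: the right-hand side is continuous on $[z,c]$ for any $c<Q$, so $F_\psi\phi'$ is absolutely continuous and $C^1$ off the corner at $P$; continuity of $F_\psi>0$ across $P$ promotes this to continuity of $\phi'=(F_\psi\phi')/F_\psi$ on $[z,Q)$.

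Next I would run a Frobenius analysis at the singular endpoint $Q$. Writing $\tau=Q-t$, the indicial equation yields exponents $\pm K/c$, where $c\in(0,1]$ (the upper bound from $|\psi'(Q)|=1$). The admissibility condition forces $\phi\sim a(Q-t)^{K/c}$ with $K/c\ge K\ge 1$, so $\phi'$ and $\phi/F_\psi$ are both $O((Q-t)^{K/c-1})$ and bounded near $Q$; continuity on compact subintervals of $[z,Q)$ then gives boundedness throughout.

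The key inequality $|\phi'|^2\le K^2\phi^2/F_\psi^2$ on $[P,Q)$ I would prove via the Riccati variable $q:=F_\psi\phi'/\phi$, which is well-defined and continuous on $(z,Q)$ and satisfies
\[
q'=\frac{K^2-q^2}{F_\psi}-\lambda F_\psi,
\]
the target inequality being equivalent to $|q|\le K$. The Frobenius expansion yields $q(t)\to -K$ as $t\to Q^-$. The remaining ingredient is the boundary value $q(P)<0$, for which I would use a Wronskian comparison with $u$ on $[z_0,P]$. The identity $((R-t)W)'=-(\lambda-\Lambda)(R-t)\phi u$ with $W:=\phi'u-\phi u'$, combined with $u(z_0)=0$ and $u'(P)=0$, integrates to
\[
\mu\phi'(P)u(P)+(R-z_0)\phi(z_0)u'(z_0)=-(\lambda-\Lambda)\int_{z_0}^{P}(R-t)\phi u\,dt,
\]
whose right-hand side is strictly negative. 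Since every factor of the second left-hand term is positive, we conclude $\phi'(P)<0$, hence $q(P)<0<K$. A first-crossing argument then confines $q$: at any point in $[P,Q)$ where $q=\pm K$ one has $q'=-\lambda F_\psi<0$, which is incompatible with $q$ being driven to $-K$ from above as $t\to Q^-$ (for the lower bound) and with $q$ crossing upward through $K$ (for the upper bound).

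The main obstacle is the Wronskian step: this is the sole place where $\lambda>\Lambda$ enters, and without the resulting inequality $\phi'(P)<0$ the first-crossing argument for $q\le K$ would be incomplete since a priori $q(P)$ could exceed $K$. Once $\phi'(P)<0$ is in hand, the Frobenius analysis at $Q$ and the subsequent Riccati confinement are routine.
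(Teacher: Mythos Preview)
Your plan is sound and close in spirit to the paper's: the limit $\phi(Q^-)=0$ via non-integrability of $1/F_\psi$, and $\phi'(P)<0$ via a Wronskian comparison against $u$ on $[z_0,P]$, coincide with the paper's arguments (the latter is exactly what the paper invokes as the Picone identity). Your Riccati variable $q=F_\psi\phi'/\phi$ and the paper's energy $H=F_\psi^2(\phi')^2-K^2\phi^2=\phi^2(q^2-K^2)$ are equivalent formulations, and your first-crossing argument is the Riccati translation of the paper's observation that $H'=-2\lambda F_\psi^2\phi\phi'$.

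The one substantive divergence---and the one place your sketch is vulnerable---is the Frobenius analysis at $Q$, which you use both for the endpoint condition $q(Q^-)=-K$ feeding the confinement argument and for the boundedness of $\phi'$ and $\phi/F_\psi$. The lemma assumes only $\psi\in C^1$ on $[P,Q]$, so the ODE coefficients are not analytic and a Frobenius series need not converge; extracting the leading asymptotic $\phi\sim a(Q-t)^{K/c}$ under merely continuous $F_\psi'$ requires something like Levinson's asymptotic-integration theorem and is not as routine as you suggest. The paper sidesteps this by reversing your order of operations: it first shows $H(Q^-)=0$ purely from integrability (the same device that gave $\phi(Q^-)=0$, now applied to $F_\psi^2(\phi')^2$, whose derivative $2K^2\phi\phi'-2\lambda F_\psi^2\phi\phi'$ is integrable), deduces $H\le 0$ on $[P,Q)$, and only then uses the resulting bound $-F_\psi\phi'\le K\phi$ inside the ODE to estimate $\phi''$ from below near $Q$, giving boundedness of $\phi'$ and then, via Cauchy's mean value theorem (using $F_\psi'(Q)<0$), of $\phi/F_\psi$. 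Your Riccati confinement can be salvaged the same way: replace the Frobenius input $q(Q^-)=-K$ by the integrability fact $H(Q^-)=0$; any excursion $q(t_0)<-K$ forces $H(t_0)>0$, and since $q=-K$ can only be crossed downward (there $q'=-\lambda F_\psi<0$) one has $q<-K$ and hence $H'>0$ on $[t_0,Q)$, contradicting $H(Q^-)=0$.
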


\begin{proof}
Since $| \phi'|^2 F_\psi$ and $\phi^2/F_\psi$ are integrable, the function $\phi^2$ is absolutely continuous.
Moreover $\phi^2/F_\psi$ is integrable, but $1/F_\psi$ is not integrable over $(c,Q)$ for any $c$ in $(z,Q)$.
It follows that
\[
	\lim_{t \to Q} \phi(t) = 0
\]

By Lemma \ref{efs}, the function $\phi$ is continuously differentiable over $[z,Q)$, and twice continuously differentiable over $[z,P)$ and $(P,Q)$, with
\[
	(F_\psi \phi')' = \frac{K^2 \phi}{F_\psi} -\lambda_{K,N}(\psi) F_\psi \phi
\]
It is also non-vanishing over $(z,Q)$.
We may assume that $\phi$ is positive over $(z,Q)$.
Furthermore, the Picone identity (see, e.g. Zettl \cite{Z}) implies that
\[
	\phi'(P) < 0
\]
The function
\[
	F_\psi^2 | \phi' |^2 - K^2 \phi^2
\]
is differentiable over $(P,Q)$, and its derivative is
\[
	-2\lambda_{K,N}(\psi) F_\psi^2 \phi \phi'
\]
Therefore, we can prove the inequality by showing that
\[
	\lim_{t \to Q}  F_\psi^2 | \phi' |^2 = 0
\]
Note that
\[
	(F_\psi^2 |\phi'|^2)' = 2K^2 \phi \phi' - 2\lambda_{K,N}(\psi) F_\psi^2 \phi \phi'
\]
Since $| \phi'|^2 F_\psi$ and $\phi^2/F_\psi$ are integrable, it follows that $F_\psi^2 | \phi'|^2$ is absolutely continuous.
Moreover, the limit as $t$ tends to $Q$ must be zero, because $F_\psi | \phi'|^2$ is integrable and $1/F_\psi$ is not integrable over $(c,Q)$ for any $c$ in $(z,Q)$.

It remains to show that $\phi'$ and $\frac{\phi}{F_\psi}$ are bounded over $[z,Q)$.
Let $z_*$ be a point in $[P,Q)$ such that over $[z_*,Q)$,
\[
	\frac{K^2}{F_\psi} -\lambda_{K,N}(\psi) F_\psi > 0
\]
Then $\phi'$ cannot vanish in $[z_*,Q)$.
That is $\phi'$ is negative over $[z_*,Q)$.
We have seen that over $(z_*,Q)$,
\[
	K \phi \ge - F_\psi \phi'
\]
Now over $(z_*,Q)$,
\[
	\phi'' \ge - \frac{F_\psi' \phi'}{F_\psi} - \frac{K \phi'}{F_\psi} -\lambda_{K,N}(\psi) \phi
\]
In particular, since $K \ge 1$,
\[
	\liminf_{t \to Q} \phi'' \ge 0
\]
Therefore $\phi'$ is bounded.
Since $F_\psi'(Q)<0$, it follows from Cauchy's mean value theorem that $\frac{\phi}{F}$ is bounded.
\end{proof}

To compare $\lambda_{K,N}(\chi)$ and $\lambda_{K,N}(\omega)$ we will unroll $\chi$ to $\omega$.
The following lemma describes the homotopy more precisely.

\begin{Lemma}
\label{unroll}
Let $\chi_0:[P,L^*] \to \R^2$ be a continuously differentiable curve, parametrized by arc length.
Assume $\chi_0(P)=(\mu,0)$.
Write $\chi_0=(F_0, G_0)$, and assume that $F_0(L^*)=0$ and $F_0'(L^*)=-1$.
Also assume that $F_0$ is positive over $[P,L^*)$ and $G_0'$ is non-negative over $[P,L^*]$.
Define a curve $\chi_1:[P,L^*] \to \R^2$ by
\[
	\chi_1(t) = \Big( R - t, 0 \Big)
\]
Then there is a $C^1$ homotopy $\chi_s:[P,L^*] \to \R^2$ for $s$ in $[0,1]$ with the following properties.
The homotopy fixes $P$, that is $\chi_s(P)=(\mu,0)$ for all $s$ in $[0,1]$.
Each curve in the homotopy is parametrized by arc length, so for all $t$ in $[P,L^*]$ and for all $s$ in $[0,1]$,
\[
	| \chi_s'(t) | = 1
\]
If we write $\chi_s = (F_s, G_s)$, then for all $t$ in $[P,L^*]$ and for all $s$ in $[0,1]$,
\[
	\dot F_s(t) \le 0
\]
Finally, if $L_s^*$ is defined by
\[
	L_s^* = \min \Big\{ t \in [P,L^*] : F_s(t) = 0 \Big\}
\]
then $F_s'(L_s^*) < 0$, for all $s$ in $[0,1]$.
\end{Lemma}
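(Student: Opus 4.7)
My plan is to construct the homotopy by linearly interpolating the tangent angle. Since $\chi_0$ is $C^1$ and parametrized by arc length, one can write $\chi_0'(t) = (\cos\theta_0(t), \sin\theta_0(t))$ for a continuous $\theta_0 : [P, L^*] \to \R$. The hypothesis $G_0'(t) = \sin\theta_0(t) \ge 0$ lets us take $\theta_0$ with values in $[0, \pi]$, and the endpoint condition $F_0'(L^*) = -1$ forces $\theta_0(L^*) = \pi$. For $s \in [0, 1]$ set
\[
\theta_s(t) = (1-s)\theta_0(t) + s\pi, \qquad \chi_s(t) = (\mu, 0) + \int_P^t \bigl(\cos\theta_s(u), \sin\theta_s(u)\bigr) \, du.
\]
Since $\theta_s(t) \in [s\pi, \pi] \subseteq [0, \pi]$, the map $\chi_s$ is jointly $C^1$ in $(s, t)$.

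Most of the claimed properties then follow at once. By construction $\chi_s(P) = (\mu, 0)$ and $|\chi_s'(t)| = 1$. At $s = 1$, $\theta_1 \equiv \pi$, so $\chi_1'(t) = (-1, 0)$ and, using $R = P + \mu$, $\chi_1(t) = (R - t, 0)$. Differentiating the first component in $s$ gives
\[
\dot F_s(t) = -\int_P^t \sin\theta_s(u)\, \bigl(\pi - \theta_0(u)\bigr) \, du,
\]
and both factors in the integrand are non-negative, so $\dot F_s(t) \le 0$. In particular $F_s$ is pointwise non-increasing in $s$, so $L_s^* \in [P, L^*]$ is well defined and non-increasing in $s$.

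The remaining condition $F_s'(L_s^*) < 0$ is the main obstacle. Since $F_s > 0$ on $[P, L_s^*)$ one already has $F_s'(L_s^*) = \cos\theta_s(L_s^*) \le 0$, i.e.\ $\theta_s(L_s^*) \ge \pi/2$. For $s > 1/2$ the range $\theta_s(t) \in [s\pi, \pi]$ forces strict inequality automatically, so $F_s' < 0$ everywhere on $[P, L^*]$. For $s \le 1/2$ one has to exclude the tangential touch $\theta_s(L_s^*) = \pi/2$, in which $F_s$ has an interior minimum grazing zero. To handle such a bad value $s_0$, I would use that the integrand in $\dot F_{s_0}(L_{s_0}^*)$ is strictly negative on a set of positive measure whenever $\theta_0 \not\equiv \pi$ on $[P, L_{s_0}^*]$, so $\dot F_{s_0}(L_{s_0}^*) < 0$; hence for $s$ slightly greater than $s_0$, $F_s$ drops below zero near $L_{s_0}^*$, the new first zero $L_s^*$ lies just to the left of $L_{s_0}^*$ on a strictly decreasing branch of $F_s$, and $F_s'(L_s^*) < 0$ strictly. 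A symmetric analysis for $s$ slightly below $s_0$, together with a continuity/openness argument, should show that the set of bad parameters is empty, or, failing that, it can be avoided by composing the homotopy with a strictly increasing reparametrization in $s$ or by adding a small smooth correction to $\theta_s$ near $L_{s_0}^*$ that lifts $\theta_s$ off $\pi/2$ without disturbing the previously established properties.
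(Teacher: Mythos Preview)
Your construction via linear interpolation of the tangent angle is natural and correctly handles most of the required properties: the fixed initial point, the arc-length parametrization, and the monotonicity $\dot F_s \le 0$ all follow cleanly. But the last condition, $F_s'(L_s^*) < 0$ for \emph{every} $s \in [0,1]$, is not established, and your proposed fixes do not close the gap.

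The tangential-touch scenario you describe can genuinely occur with your homotopy: nothing prevents $F_s$ from developing an interior local minimum that first grazes zero at some parameter $s_0 \in (0, 1/2]$, at which moment $L_{s_0}^*$ jumps to this tangency and $F_{s_0}'(L_{s_0}^*) = 0$. Your openness heuristic at best suggests that the set of bad parameters is nowhere dense; it does not show it is empty, and the lemma demands the strict inequality at every $s$. A reparametrization in $s$ cannot help, since it does not change the family of curves $\{\chi_s : s \in [0,1]\}$ at all. Adding a ``small smooth correction to $\theta_s$'' would have to be done simultaneously for all bad $s$, preserve $\dot F_s \le 0$, keep the family $C^1$, and still end at $\theta_1 \equiv \pi$; this is essentially the whole problem, and you have not carried it out.

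The paper's proof takes a different route precisely to avoid this difficulty. Writing $\chi_s' = (-\cos\tilde\theta_s, \sin\tilde\theta_s)$ with $\tilde\theta_0 \in [0,\pi]$ and $\tilde\theta_0(L^*) = 0$, it builds the homotopy in stages rather than in one linear step: first a small $C^0$ perturbation of $\tilde\theta_0$ making $\pi/2$ a regular value, so that $\{\tilde\theta \ge \pi/2\}$ consists of finitely many intervals $[a_i,b_i]$; then, processing these intervals from right to left, it caps $\tilde\theta$ just below $\pi/2$ on a small neighborhood of each; finally it homotopes the resulting function (now everywhere below $\pi/2$) monotonically down to zero. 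The right-to-left ordering together with the smallness of the parameters ensures that during every stage the first zero of $F_s$ remains in a region where $\tilde\theta_s < \pi/2$, hence $F_s' = -\cos\tilde\theta_s < 0$ there. This staged construction is the substantive content your one-step interpolation is missing.
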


\begin{proof}
Let $h:[0,1] \to \R$ be a continuously differentiable function such that $h(0)=0$, $h'(0)=0$, $h(1)=1$, $h'(1)=0$ and $h'(s) > 0$ for all $s$ in $(0,1)$.
For functions $f_0:[P,L^*] \to \R$ and $f_1:[P,L^*] \to \R$, with $f_0 \ge f_1$, we define a homotopy by
\[
	f_s=(1-h(s))f_0+h(s)f_1
\]
We refer to this homotopy as the monotonic homotopy from $f_0$ to $f_1$ via $h$.

There is a continuous function $\theta_0:[P,L^*] \to [0,\pi]$ such that, for all $t$ in $[P,L^*]$
\[
	\chi_0'(t) = \Big( -\cos \theta_0(t), \sin \theta_0(t) \Big)
\]
Let $\epsilon>0$ be small.
There is a continuous function $\theta_1:[P,L^*] \to [0,\pi]$, which has the following three properties.
First for all $t$ in $[P,L^*]$,
\[
	\theta_0(t) - \epsilon \le \theta_1(t) \le \theta_0(t)
\]
Second $\theta_1$ is continuously differentiable over the set
\[
	\bigg\{ t \in [P,L^*] : \theta_1(t) \in (\pi/4, \pi] \bigg\}
\]
and $\theta_1$ has finitely many critical points in this set.
Third $\pi/2$ is a regular value of $\theta_1$.
We take the monotonic homotopy from $\theta_0$ to $\theta_1$ via $h$.
The set
\[
	\bigg\{ t \in [P,L^*] : \theta_1(t) \ge \pi/2 \bigg\}
\]
consists of finitely many closed intervals $[a_1, b_1], [a_2,b_2], \ldots$, indexed so that $a_i > b_{i+1}$ for all $i$.
Let $U_1$ be a small neighborhood of $[a_1,b_1]$.
Let $\delta_1>0$ be small, and define $\theta_2:[P,L^*] \to \R$ by
\[
	\theta_2(t) =
	\begin{cases}
		\theta_1(t) & t \notin U_1 \\	
		\min(\theta_1(t), \frac{\pi}{2} - \delta_1) & t \in U_1 \\
	\end{cases}
\]
If $U_1$ is sufficiently small, then for sufficiently small $\delta_1$, this function is continous.
Take the monotonic homotopy from $\theta_1$ to $\theta_2$ via $h$.
Repeat this for each of the closed intervals, letting $U_2, U_3, \ldots$ be small neighborhoods of each of the intervals, and letting $\delta_2, \delta_3, \ldots$ be small positive numbers.
This yields finitely many homotopies.
Finally, take the monotonic homotopy from the last function to the constant zero function via $h$.
Let $\tilde\theta_s:[P,L^*] \to [0,\pi]$, for $s$ in $[0,1]$ be the composition of all of these homotopies.
Then define $\chi_s:[P,L^*] \to \R^2$ for $s$ in $[0,1]$ to be the $C^1$ homotopy with $\chi_s(P)=(\mu,0)$ and for all $t$ in $[P,L^*]$,
\[
	\chi_s'(t) = \Big( -\cos \tilde \theta_s(t), \sin \tilde \theta_s(t) \Big)
\]
If the parameters are sufficiently small, then this homotopy satisfies the properties.
\end{proof}

Now we can compare $\lambda_{K,N}(\chi)$ and $\lambda_{K,N}(\omega)$.

\begin{Lemma}
\label{xo}
If $\chi$ is not equal to $\omega$, then
\[
	\lambda_{K,N}(\chi) < \lambda_{K,N}(\omega)
\]
\end{Lemma}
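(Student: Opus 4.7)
The plan is proof by contradiction: assume $\lambda_{K,N}(\chi) \geq \lambda_{K,N}(\omega)$, deform $\chi$ into $\omega$ via Lemma \ref{unroll}, and show that $\lambda_{K,N}$ is non-decreasing along the deformation with a strict increase somewhere. Apply Lemma \ref{unroll} to the curve $\chi|_{[P, L^*]}$, which by construction of $\chi$ satisfies its hypotheses, to produce a $C^1$ homotopy $\chi_s : [P, L^*] \to \R^2$, $s \in [0,1]$, with $\chi_s(P) = (\mu, 0)$, $|\chi_s'| = 1$, $\dot F_s \leq 0$, and $\chi_1(t) = (R - t, 0)$. Extend each $\chi_s$ to a curve $\tilde\chi_s : [0, L_s^*] \to \R^2_+$ by gluing on the fixed segment $t \mapsto (R - t, 0)$ on $[0, P]$; then $\tilde\chi_0 = \chi$ and $\tilde\chi_1 = \omega$.

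For each $s$ let $\phi_s$ be the eigenfunction of Lemma \ref{efs} realizing $\lambda_{K,N}(\tilde\chi_s)$. After reparametrizing each $[0, L_s^*]$ to a fixed interval so the domain is independent of $s$, the map $s \mapsto \lambda_{K,N}(\tilde\chi_s)$ is $C^1$, and a Hadamard variation---using the eigenvalue equation for integration by parts, noting $\dot F_s \equiv 0$ on $[0, P]$, and invoking the boundedness of $\phi_s'$ and $\phi_s / F_s$ from Lemma \ref{efes} to eliminate the boundary contribution at $t = L_s^*$---yields
\begin{equation*}
\dot\lambda_{K,N}(\tilde\chi_s) \int_0^{L_s^*} \phi_s^2 F_s \, dt \;=\; \int_P^{L_s^*} \dot F_s \left[ (\phi_s')^2 - \frac{K^2 \phi_s^2}{F_s^2} - \lambda_{K,N}(\tilde\chi_s)\, \phi_s^2 \right] dt.
\end{equation*}
I would sign the bracket by locating the last interior zero $z_{N-1}(s)$ of $\phi_s$ via Sturm comparison on $[0, P]$: there $\phi_s$ and $\Phi_{K,N}$ satisfy the same ODE but with eigenvalues $\lambda_{K,N}(\tilde\chi_s) \geq \lambda_{K,N}(\omega)$, forcing all $N - 1$ zeros of $\phi_s$ to lie in $(0, z_0] \subset (0, P)$. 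Hence $\phi_s$ restricted to $[z_{N-1}(s), L_s^*]$ is a first Dirichlet eigenfunction with eigenvalue at least $\lambda_{K,N}(\omega) > \Lambda$, so Lemma \ref{efes} applies and gives $(\phi_s')^2 \leq K^2 \phi_s^2 / F_s^2$ on $[P, L_s^*]$; the bracket is therefore at most $-\lambda_{K,N}(\tilde\chi_s)\phi_s^2 \leq 0$, and since $\dot F_s \leq 0$ the integrand is non-negative, so $\dot\lambda_{K,N}(\tilde\chi_s) \geq 0$.

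For strictness, since $\chi \neq \omega$ there is some $s_0$ at which $\dot F_{s_0}$ is strictly negative on a set of positive measure in $[P, L_{s_0}^*]$, and $\phi_{s_0}$, being a nontrivial solution of a second-order linear ODE, cannot vanish on such a set; the integrand is then strictly positive on a set of positive measure and $\dot\lambda_{K,N}(\tilde\chi_{s_0}) > 0$. Integrating over $s \in [0, 1]$ yields $\lambda_{K,N}(\chi) < \lambda_{K,N}(\omega)$, the desired contradiction. The main obstacle is making the Hadamard variation rigorous: the endpoint $L_s^*$ varies with $s$, the integrand is singular as $t \to L_s^*$, and the homotopy is only piecewise $C^1$, so the boundedness estimates of Lemma \ref{efes} are essential for controlling boundary contributions; one must also verify by a continuity bootstrap, using the contradiction hypothesis, that $\lambda_{K,N}(\tilde\chi_s)$ stays above $\Lambda$ throughout so that Lemma \ref{efes} can be applied uniformly in $s$.
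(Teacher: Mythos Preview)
Your strategy matches the paper's: contradict $\lambda_{K,N}(\chi)\ge\lambda_{K,N}(\omega)$ by deforming via Lemma~\ref{unroll} and showing the eigenvalue can only increase, with strictness coming from $\dot F\not\equiv 0$. The variation formula you write is exactly what the paper obtains. But the two technical points you flag as ``obstacles'' are genuine gaps, and the paper's resolution of each is structurally different from what you sketch.

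\emph{The bootstrap.} To invoke Lemma~\ref{efes} you need $z_{N-1}(s)<z_0$, and your Sturm comparison on $[0,P]$ delivers this only when $\lambda_{K,N}(\tilde\chi_s)\ge\lambda_{K,N}(\omega)$---there is no slack. The circularity (stay $\ge\lambda_{K,N}(\omega)$ because the derivative is $\ge 0$; the derivative is $\ge 0$ because we are $\ge\lambda_{K,N}(\omega)$) does not close: a $C^1$ function with $f(0)=c$ and $f'\ge 0$ on $\{f\ge c\}$ can still drop below $c$ immediately (take $f(s)=c-s^2$). Your stated fallback, staying above $\Lambda$, does not help: $\Lambda$ controls the hypothesis $\lambda_{K,1}(\psi)>\Lambda$ of Lemma~\ref{efes} but not the hypothesis $z<z_0$. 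The paper sidesteps this by \emph{freezing} $z$ at $s=0$: it lets $z$ be the last root of $\phi_{K,N}$ for $\chi$, uses Picone once (at $s=0$, where the contradiction hypothesis is available) to get $z<z_0$, and then runs the homotopy on $\lambda_{K,1}$ of the restriction $\chi_s|_{[z,L_s^*]}$. Now only the condition $\lambda_{K,1}(\omega_s)>\Lambda$ is needed along the homotopy, and since $\Lambda<\lambda_{K,N}(\omega)$ this strict gap makes the continuity bootstrap close cleanly.

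\emph{Differentiability in $s$.} With a singular moving endpoint $L_s^*$ and a merely piecewise-$C^1$ homotopy, your claim that $s\mapsto\lambda_{K,N}(\tilde\chi_s)$ is $C^1$ is not free, and the paper does not assert it. Instead the paper proves continuity directly (upper and lower semicontinuity, by transplanting eigenfunctions and test functions through a linear reparametrisation $\ell_s$ of $[z,L_s^*]$ onto $[z,L_\sigma^*]$ and checking that $F_\sigma\circ\ell_s^{-1}/F_s\to 1$ uniformly). For monotonicity it fixes $\sigma$, freezes $\phi_\sigma$, and defines $\xi(s)$ as the Rayleigh quotient of $\phi_\sigma$ with respect to $F_s$; then $\lambda_{K,1}(\omega_s)\le\xi(s)$ for $s\le\sigma$ with equality at $\sigma$, and the left derivative $\partial_-\xi(\sigma)$ is exactly your displayed integral. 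Lemma~\ref{efes} supplies the pointwise sign and the boundedness of $\phi_\sigma'$ and $\phi_\sigma/F_\sigma$ needed to differentiate under the integral. This yields a non-negative lower-left Dini derivative wherever $\lambda_{K,1}(\omega_\sigma)>\Lambda$, and together with continuity that is enough for monotonicity without ever claiming the eigenvalue itself is differentiable in $s$.
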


\begin{proof}
Suppose $\lambda_{K,N}(\chi) \ge \lambda_{K,N}(\omega)$.
Let $\phi_{K,1}, \phi_{K,2}, \phi_{K,3},\ldots$ be the functions given by Lemma \ref{efs} associated to the curve $\chi$.
Let $z$ be the largest root of $\phi_{K,N}$.
Define $\chi_0:[z,L^*] \to \R^2_+$ by
\[
	\chi_0 = \chi \Big|_{[z,L^*]}
\]
It follows from Lemma \ref{efs} that
\[
	\lambda_{K,N}(\chi) = \lambda_{K,1}(\chi_0)
\]
Define $\omega_1:[z,R] \to \R^2_+$ by
\[
	\omega_1(t) = (R-t,0)
\]
It follows from the Picone identity that $z<z_0$ and
\[
	\lambda_{K,N}(\omega) \ge \lambda_{K,1}(\omega_1)
\]
Let $\chi_s:[P,L^*] \to \R^2_+$ be the homotopy discussed in Lemma \ref{unroll}.
Extend the domain of each curve $\chi_s$ to $[z,L^*]$, by defining, for all $s$ in $[0,1]$ and for all $t$ in $[z,P]$,
\[
	\chi_s(t) = \chi_0(t) = (R-t, 0)
\]
For $s$ in $[0,1]$, write $\chi_s=(F_s,G_s)$ and define
\[
	L_s^* = \min \bigg\{ t \in [z,L^*] : F_s(t) = 0 \bigg\}
\]
Then define
\[
	\omega_s = \chi_s \Big|_{[z,L_s^*]}
\]
These functions map into $\R^2_+$.
Note $\omega_1$ agrees with the previous defintion and $\omega_0=\chi_0$.
We will show that the function
\[
	s \mapsto \lambda_{K,1}(\omega_s)
\]
is monotonically increasing over $[0,1]$.
We will do this by showing it is continuous and has non-negative lower left Dini derivative at points $\sigma$ in $(0,1]$ where $\lambda_{K,1}(\omega_\sigma) > \Lambda$.

We first show the function
\[
	s \mapsto \lambda_{K,1}(\omega_s)
\]
is lower semicontinuous.
Fix a point $\sigma$ in $[0,1]$ such that
\[
	\liminf_{s \to \sigma} \lambda_{K,1}(\omega_s) < \infty
\]
Let $\{ s_k \}$ be a sequence in $[0, 1]$ converging to $\sigma$ such that
\[
	\lim_{k \to \infty} \lambda_{K,1}(\omega_{s_k}) = \liminf_{s \to \sigma} \lambda_{K,1}(\omega_s)
\] 
By Lemma \ref{efs}, for each $s$ in $[0, 1]$, there is a function $\phi_s$ in $\Lip_0(z,L_s^*)$ such that
\[
	\lambda_{K,1}(\omega_s) = \frac{\int_z^{L_s^*} |\phi_s'|^2 F_s + \frac{K^2 \phi_s^2}{F_s} \,dt}{\int_z^{L_s^*} \phi_s^2 F_s \,dt}
\]
We may assume that each function $\phi_s$ is normalized so that
\[
	\int_z^{L_s^*} | \phi_s |^2 F_s \,dt = 1
\]
For $s$ in $[0,1]$, let $\ell_s:[z,L_\sigma^*] \to [z,L_s^*]$ be a linear function with $\ell_s(z)=z$ and $\ell_s(L_\sigma^*)=L_s^*$.
Define $W_s = \phi_s \circ \ell_s$, for $s$ in $[0,1]$.
Then define $\tau:[0,1] \to \R$ by
\[
	\tau(s) = \frac{\int_z^{L_\sigma^*} |W_s'|^2 F_\sigma + \frac{K^2 W_s^2}{F_\sigma} \,dt}{\int_z^{L_\sigma^*} W_s^2 F_\sigma \,dt}
\]
Changing variables yields
\[
	\tau(s) = \frac{\int_z^{L_s^*} |\ell_s'|^2 |\phi_s'|^2 (F_\sigma \circ \ell_s^{-1})+ \frac{K^2 \phi_s^2}{(F_\sigma \circ \ell_s^{-1})} \,dt}{\int_z^{L_s^*} \phi_s^2 (F_\sigma \circ \ell_s^{-1}) \,dt}
\]
For $s$ in $[0,1]$, define $\Psi_s:[0,L^*] \to \R$ by
\[
	\Psi_s(t) =
	\begin{cases}
		\frac{F_\sigma \circ \ell_s^{-1}(t)}{F_s(t)} & t \in [0,L_s^*) \\
		1 & t \in [L_s^*,L^*]
	\end{cases}
\]
Note that
\[
	\lim_{s \to \sigma} \Psi_s = 1
\]
and the convergence is uniform.
This follows from the fact that the functions
\[
	(s,t) \mapsto F_\sigma \circ \ell_s^{-1} (t)
\]
and
\[
	(s,t) \mapsto F_s(t)
\]
are both differentiable at the point $(\sigma, L_\sigma^*)$ and their derivatives at this point are equal.
Now we see that
\[
	\lim_{s \to \sigma} \int_z^{L_s^*} \phi_s^2 F_s \, dt - \int_z^{L_s^*} \phi_s^2 (F_\sigma \circ \ell_s^{-1}) \,dt =0
\]
Similarly,
\[
	\lim_{k \to \infty} \int_z^{L_{s_k}^*} |\phi_{s_k}'|^2 F_{s_k} \, dt - \int_z^{L_{s_k}^*} |\phi_{s_k}'|^2 (F_\sigma \circ \ell_{s_k}^{-1}) \,dt =0
\]
Also,
\[
	\lim_{k \to \infty} \int_z^{L_{s_k}^*} \frac{K^2 \phi_{s_k}}{F_{s_k}} \, dt - \int_z^{L_{s_k}^*} \frac{K^2 \phi_{s_k}}{(F_\sigma \circ \ell_{s_k}^{-1})} \,dt =0
\]
It follows that
\[
	\lim_{k \to \infty} \Big( \lambda_{K,1}(\omega_{s_k}) - \tau({s_k}) \Big) = 0
\]
Moreover $\tau(s) \ge \lambda_{K,1}(\omega_\sigma)$ for all $s$ in $[\sigma,1]$.
Therefore,
\[
	\liminf_{s \to \sigma} \lambda_{K,1}(\omega_s) \ge \lambda_{K,1}(\omega_\sigma)
\]
This proves that the function
\[
	s \mapsto \lambda_{K,1}(\omega_s)
\]
is lower semicontinuous.

Next we show the function
\[
	s \mapsto \lambda_{K,1}(\omega_s)
\]
is  upper semicontinuous.
Fix a point $\sigma$ in $[0,1]$.
By Lemma \ref{efs}, there is a function $\phi_\sigma$ in $\Lip_0(z,L_\sigma^*)$ such that
\[
	\lambda_{K,1}(\omega_\sigma) = \frac{\int_z^{L_\sigma^*} |\phi_\sigma'|^2 F_\sigma + \frac{K^2 \phi_\sigma^2}{F_\sigma} \,dt}{\int_z^{L_\sigma^*} \phi_\sigma^2 F_\sigma \,dt}
\]
For $s$ in $[0,1]$, let $\ell_s:[z,L_\sigma^*] \to [z,L_s^*]$ be a linear function with $\ell_s(z)=z$ and $\ell_s(L_\sigma^*)=L_s^*$.
Define $V_s = \phi_\sigma \circ \ell_s^{-1}$, for $s$ in $[0,1]$.
Changing variables yields
\[
	\lambda_{K,1}(\omega_\sigma) = \frac{\int_z^{L_s^*} |\ell_s'|^2 |V_s'|^2 (F_\sigma \circ \ell_s^{-1})+ \frac{K^2 V_s^2}{(F_\sigma \circ \ell_s^{-1})} \,dt}{\int_z^{L_s^*} V_s^2 (F_\sigma \circ \ell_s^{-1}) \,dt}
\]
Then define $\Upsilon:[0,1] \to \R$ by
\[
	\Upsilon(s) = \frac{\int_z^{L_s^*} |V_s'|^2 F_s + \frac{K^2 V_s^2}{F_s} \,dt}{\int_z^{L_s^*} V_s^2 F_s \,dt}
\]
For $s$ in $[0,1]$, define $\Psi_s:[0,L^*] \to \R$ by
\[
	\Psi_s(t) =
	\begin{cases}
		\frac{F_\sigma \circ \ell_s^{-1}(t)}{F_s(t)} & t \in [0,L_s^*) \\
		1 & t \in [L_s^*,L^*]
	\end{cases}
\]
As before,
\[
	\lim_{s \to \sigma} \Psi_s = 1
\]
and the convergence is uniform.
Now we see that
\[
	\lim_{s \to \sigma} \int_z^{L_s^*} V_s^2 F_s \, dt - \int_z^{L_s^*} V_s^2 (F_\sigma \circ \ell_s^{-1}) \,dt =0
\]
Similarly,
\[
	\lim_{s \to \sigma} \int_z^{L_s^*} |V_s'|^2 F_s \, dt - \int_z^{L_s^*} |V_s'|^2 (F_\sigma \circ \ell_s^{-1}) \,dt =0
\]
Also,
\[
	\lim_{s \to \sigma} \int_z^{L_s^*} \frac{K^2 V_s}{F_s} \, dt - \int_z^{L_s^*} \frac{K^2 V_s}{(F_\sigma \circ \ell_s^{-1})} \,dt =0
\]
It follows that
\[
	\lim_{s \to \sigma} \Upsilon(s) = \lambda_{K,1}(\omega_\sigma)
\]
Moreover $\Upsilon(s) \ge \lambda_{K,1}(\omega_s)$ for all $s$ in $[0,\sigma]$.
Therefore,
\[
	\limsup_{s \to \sigma} \lambda_{K,1}(\omega_s) \le \lambda_{K,1}(\omega_\sigma)
\]
This proves that the function
\[
	s \mapsto \lambda_{K,1}(\omega_s)
\]
is upper semicontinuous, hence continuous.
We remark that Cheeger and Colding \cite{CC} proved a general theorem regarding continuity of eigenvalues.

Now we show the left lower Dini derivative of the function
\[
	s \mapsto \lambda_{K,1}(\omega_s)
\]
is non-negative at every point $\sigma$ in $(0,1]$ such that $\lambda_{K,1}(\omega_\sigma)> \Lambda$.
Fix $\sigma$ in $(0,1]$ and assume that
\[
	\lambda_{K,1}(\omega_\sigma) > \Lambda
\]
By Lemma \ref{efs}, there is a function $\phi_\sigma$ in $\Lip_0(0,L_\sigma^*)$ such that
\[
	\lambda_{K,1}(\omega_\sigma) = \frac{\int_z^{L_\sigma^*} |\phi_\sigma'|^2 F_\sigma + \frac{K^2 \phi_\sigma^2}{F_\sigma} \,dt}{\int_z^{L_\sigma^*} \phi_\sigma^2 F_\sigma \,dt}
\]
By Lemma \ref{efes},
\[
	\lim_{t \to L_\sigma^*} \phi_\sigma(t) = 0
\]
Also $\phi'$ and $\frac{\phi}{F_\sigma}$ are bounded over $[z,L^*)$.
Over $[P,L_\sigma^*]$,
\[
	| \phi_\sigma' |^2 - \frac{K^2 \phi_\sigma^2}{|F_\sigma|^2 } \le 0
\]
Note that, for $s$ in $[0,\sigma]$,
\[
	L_s^* \ge L_\sigma^*
\]
Define a function $\xi:[0,\sigma] \to \R$ by
\[
	\xi(s) = \frac{\int_z^{L_\sigma^*} | \phi_\sigma' |^2 F_s + \frac{K^2 \phi_\sigma^2}{F_s} \,dt}{\int_z^{L_\sigma^*} | \phi_\sigma |^2 F_s \,dt}
\]
Now $\lambda_{K,1}(\omega_s) \le \xi(s)$ for $s$ in $[0,\sigma]$, and $\lambda_{K,1}(\omega_\sigma)=\xi(\sigma)$.
Also $\xi$ is left differentiable at $\sigma$ with
\[
	\del_- \xi(\sigma)= \frac{\int_P^{L_\sigma^*} ( | \phi_\sigma' |^2 - \frac{K^2 \phi_\sigma^2}{|F_\sigma|^2 } - \lambda_{K,1}(\omega_\sigma) \phi_\sigma^2 ) \dot F_\sigma \,dt}{\int_z^{L_\sigma^*} | \phi_\sigma |^2 F_\sigma \,dt}
\]
The function $\dot F_\sigma$ is non-positive.
That is, $\del_- \xi(\sigma) \ge 0$.
This implies that the lower left Dini derivative of the function
\[
	s \mapsto \lambda_{K,1}(\omega_s)
\]
is non-negative at $\sigma$.
That is, the lower left Dini derivative is non-negative at every point $\sigma$ in $(0,1]$ such that $\lambda_{K,1}(\omega_\sigma) > \Lambda$.
Since the function is also continuous and $\lambda_{K,1}(\omega_0) > \Lambda$, it follows that the function is monotonically increasing.
Moreover, if $\chi$ is not equal to $\omega$, then for some $\sigma$, the function $\dot F_\sigma$ is not identically zero, which yields $\del_+ \xi(\sigma) < 0$.
This implies that the lower left Dini derivative of the function
\[
	s \mapsto \lambda_{K,1}(\omega_s)
\]
is negative at some point in $[0,1]$.
In particular, the function is not constant.
Now
\[
	\lambda_{K,1}(\chi_0) = \lambda_{K,1}(\omega_0) < \lambda_{K,1}(\omega_1)
\]
This yields $\lambda_{K,N}(\chi) < \lambda_{K,N}(\omega)$.
\end{proof}

\begin{proof}[Proof of Lemma 1]
Suppose $\alpha$ is not equal to $\omega$ and $\lambda_{K,N}(\alpha) \ge \lambda_{K,N}(\omega)$.
Then by Lemmas \ref{ab}, \ref{bg}, \ref{gz}, and \ref{xo},
\[
	\lambda_{K,N}(\alpha) \le \lambda_{K,N}(\beta) \le \lambda_{K,N}(\gamma) \le \lambda_{K,N}(\zeta) = \lambda_{K,N}(\chi) \le \lambda_{K,N}(\omega)
\]
Since $\alpha$ is not equal to $\omega$, it must either be the case that $\alpha$ is not equal to $\beta$ or $\chi$ is not equal to $\omega$.
In the first case, the first inequality is strict by Lemma \ref{ab}.
In the second case, the last inequality is strict by Lemma \ref{xo}.
\end{proof}

\end{document}